\numberwithin{equation}{section}
\newtheorem{theorem}{Theorem}[section]
\newtheorem{thm}[theorem]{Theorem}
\newtheorem{lem}[theorem]{Lemma}
\def\\{\cr}
\def\({\left(}
\def\){\right)}
\def\[{\left[}
\def\]{\right]}
\def\<{\langle}
\def\>{\rangle}
\def\fl#1{\left\lfloor#1\right\rfloor}
\def\N{\mathbb{N}}
\def\R{\mathbb{R}}
\def\notdivides{\mathrel{\kern-3pt\not\!\kern3.5pt\bigm|}}
\begin{document}


\title{\Large \textbf{Cardinality of a floor function set}}
\author{%
\scshape {RANDELL HEYMAN}  \\
School of Mathematics and Statistics,\\ University of New South Wales \\
Sydney, Australia\\
\texttt {randell@unsw.edu.au}
}

\maketitle


\begin{abstract}
Fix a positive integer X. We quantify the cardinality of the set $\{\fl{X/n}\}_{n=1}^X$. We discuss restricting the set to those elements that are prime, semiprime or similar.
\end{abstract}



\section{Introduction}
Throughout we will restrict the variables $m$ and $n$ to positive integer values. For any real number $X$ we denote by $\fl{X}$
its integer part, that is, the greatest integer that does not exceed $X$.
The most straightforward sum of the floor function is related to the divisor summatory function since
$$\sum_{n \leqslant X} \fl{\frac{X}{n}}=\sum_{n \leqslant X}\sum_{k \leqslant X/n}1=\sum_{n \leqslant X}\tau(n),$$
where $\tau(n)$ is the number of divisors of $n$. From \cite[Theorem~2]{BouWat} we infer
$$\sum_{n \leqslant X} \fl{\frac{X}{n}}= X \log X + X (2 \gamma - 1) + O\( X^{517/1648+ o(1)}\),$$
where $\gamma$ is  the \textit{Euler--Mascheroni} constant, in particular $\gamma  \approx 0.57\, 722$.

Recent results have generalised this sum to
$$\sum_{n \leqslant X} f\(\fl{\frac{X}{n}}\),$$
where $f$ is an arithmetic function (see \cite{Bor}, \cite{Chern} and \cite{Gos}).

In this paper we take a different approach by examining the cardinality of the set
$$S(X):=\left\{m:m=\fl{\frac{X}{n}} \textrm{for some } n \le X\right\}.$$

Or main results are as follows.
\begin{thm}
\label{thm:set cardinality}
Let $X$ be a positive integer and let
$$b=\frac{-1+\sqrt{4X+1}}{2}.$$
We have
$$|S(X)|=\fl{b}+\fl{\frac{X}{\fl{b+1}}}.$$
\end{thm}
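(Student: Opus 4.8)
The plan is to split the range $1 \le n \le X$ at the threshold $k := \fl{b}$ and to count separately the two families of values $\fl{X/n}$ that arise. Everything will rest on the identity
$$b(b+1) = X,$$
which holds because $b$ is the positive root of $t^2 + t - X = 0$. This gives $k(k+1) \le X < (k+1)(k+2)$, and, writing the consecutive difference as $X/n - X/(n+1) = X/\big(n(n+1)\big)$, it says precisely that this difference is at least $1$ for $n \le k$ and strictly less than $1$ for $n \ge k+1$. I will also use the trivial fact $\fl{b+1} = k+1$.

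For the ``large'' values $L := \{\fl{X/n} : 1 \le n \le k\}$, the difference bound for $1 \le n \le k-1$ forces $\fl{X/n} \ge \fl{X/(n+1)} + 1$, so $n \mapsto \fl{X/n}$ is strictly decreasing on $\{1, \dots, k\}$ and $|L| = k$. For the ``small'' values $T := \{\fl{X/n} : k+1 \le n \le X\}$, the map $n \mapsto \fl{X/n}$ is non-increasing, and because every difference on this range is less than $1$ the value drops by at most $1$ at each step; since it starts at $\fl{X/(k+1)}$ when $n = k+1$ and ends at $\fl{X/X} = 1$ when $n = X$, a discrete intermediate value argument shows it attains every integer in between. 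Hence $T = \{1, 2, \dots, \fl{X/(k+1)}\}$ and $|T| = \fl{X/(k+1)} = \fl{X/\fl{b+1}}$. Pinning the transition down to exactly $k = \fl{b}$, and making this intermediate value step rigorous, is the part of the argument I expect to require the most care.

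It then remains to combine the two counts. The sets $L$ and $T$ are disjoint: the smallest element of $L$ is $\fl{X/k}$, the largest element of $T$ is $\fl{X/(k+1)}$, and the difference $X/k - X/(k+1) = X/\big(k(k+1)\big) \ge 1$ yields $\fl{X/k} > \fl{X/(k+1)}$. Since $\{1, \dots, k\}$ and $\{k+1, \dots, X\}$ partition $\{1, \dots, X\}$, we get $S(X) = L \cup T$, and therefore $|S(X)| = |L| + |T| = k + \fl{X/(k+1)} = \fl{b} + \fl{X/\fl{b+1}}$, which is the claimed formula. The only loose ends are the degenerate cases — notably $X = 1$, where $k = 0$ and $L = \emptyset$ — which can be verified by direct inspection.
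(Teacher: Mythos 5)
Your argument is correct, and it takes a genuinely different route from the paper's. The paper works with two \emph{overlapping} sets: $S_1(X)$, the set of values $m\le b$ (shown to be exactly $\{1,\dots,\fl{b}\}$ by exhibiting, for each such $m$, an $n$ with $X/(m+1)<n\le X/m$), and $S_2(X)$, the set of values coming from $n$ with $n(n-1)\le X$ (shown to be $\fl{b}+1$ distinct values). It then has to compute $|S_1(X)\cap S_2(X)|$ and apply inclusion--exclusion, and that intersection computation is the most delicate step of the whole proof. You instead partition the \emph{index set} $\{1,\dots,X\}$ at $k=\fl{b}$, so your two families $L$ and $T$ are disjoint by construction (via $\fl{X/k}>\fl{X/(k+1)}$), and no inclusion--exclusion is needed; the price you pay is the discrete intermediate-value argument showing $T=\{1,\dots,\fl{X/(k+1)}\}$, which replaces the paper's direct construction of a preimage for each small value $m$. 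Both proofs ultimately rest on the same two facts — the identity $b(b+1)=X$ and the dichotomy $X/\bigl(n(n+1)\bigr)\ge 1$ for $n\le b$ versus $<1$ for $n>b$ — but your disjoint decomposition makes the bookkeeping cleaner and sidesteps the paper's somewhat informal treatment of the overlap. All the steps you flag as needing care (the location of the transition at $k=\fl{b}$, the step-size-at-most-one argument, the degenerate case $X=1$ where $L=\emptyset$) do go through exactly as you describe.
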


\begin{thm}
\label{thm:set cardinality 2}
We have
$$|S(X)|=2\sqrt{X}+O(1).$$
\end{thm}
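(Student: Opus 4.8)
The plan is to deduce Theorem~\ref{thm:set cardinality 2} directly from the exact formula in Theorem~\ref{thm:set cardinality}, so the work is almost entirely asymptotic bookkeeping rather than a fresh combinatorial argument. Writing $b=\frac{-1+\sqrt{4X+1}}{2}$, the key elementary observation is that $b$ satisfies $b^2+b=X$, equivalently $b(b+1)=X$, so $b=\sqrt{X}+O(1)$; more precisely $b = \sqrt{X} - \tfrac12 + O(X^{-1/2})$. Hence $\fl{b} = \sqrt{X} + O(1)$, which handles the first term in the formula $|S(X)| = \fl{b} + \fl{X/\fl{b+1}}$.

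For the second term, the point is that $\fl{b+1}$ is within $O(1)$ of $\sqrt{X}$, so $X/\fl{b+1}$ is within $O(1)$ of $X/\sqrt{X} = \sqrt{X}$. Concretely, since $b+1 = \sqrt{X} + O(1)$ and (for $X$ large) $b+1 \geqslant \tfrac12\sqrt{X}$, we can write $\fl{b+1} = \sqrt{X} + \theta$ with $|\theta| \leqslant C$ for an absolute constant $C$, and then
$$\frac{X}{\fl{b+1}} = \frac{X}{\sqrt{X}+\theta} = \sqrt{X}\cdot\frac{1}{1+\theta/\sqrt{X}} = \sqrt{X}\left(1 + O(X^{-1/2})\right) = \sqrt{X} + O(1).$$
Applying the floor loses at most another $1$, so $\fl{X/\fl{b+1}} = \sqrt{X} + O(1)$ as well. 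Adding the two estimates gives $|S(X)| = \sqrt{X} + O(1) + \sqrt{X} + O(1) = 2\sqrt{X} + O(1)$, which is the claim. The small values of $X$ (below whatever threshold makes the expansions valid) are absorbed into the implied constant since there are only finitely many of them.

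I do not expect any genuine obstacle here: the only mild care needed is to make the implied constants genuinely absolute, i.e.\ to pin down an explicit $X_0$ beyond which $b+1 \geqslant \tfrac12\sqrt{X}$ (any $X_0 \geqslant 1$ works after a one-line check, since $b \geqslant 0$ always and $b \to \infty$), and to note that the Taylor/geometric-series estimate $1/(1+u) = 1 + O(u)$ is being applied with $u = \theta/\sqrt{X} \to 0$. If one wanted to be fully self-contained one could even avoid Theorem~\ref{thm:set cardinality} and argue directly: the map $n \mapsto \fl{X/n}$ takes at most $\fl{\sqrt{X}}$ distinct values for $n \leqslant \sqrt{X}$, and its values for $n > \sqrt{X}$ all lie in $\{1, \dots, \fl{\sqrt{X}}\}$, giving $|S(X)| \leqslant 2\sqrt{X}$, with a matching lower bound from checking that each of $1, \dots, \fl{\sqrt{X}}$ is attained and that the large-$n$ values $\fl{X/n}$ are distinct for the relevant range; but leaning on the already-proved exact formula is the cleanest route and is the one I would write up.
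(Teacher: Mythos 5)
Your proposal is correct and follows the same route as the paper, which simply deduces Theorem~\ref{thm:set cardinality 2} from the exact formula of Theorem~\ref{thm:set cardinality}; you merely spell out the asymptotic bookkeeping ($b=\sqrt{X}+O(1)$, hence both $\fl{b}$ and $\fl{X/\fl{b+1}}$ are $\sqrt{X}+O(1)$) that the paper leaves implicit in the phrase ``follows immediately.'' No issues.
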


\section{Proof of Main Theorems}
\label{sec:main theorem}
Throughout let
\begin{align}
\label{eq:b}
b=\frac{-1+\sqrt{4X+1}}{2},
\end{align}
and note that
$$\frac{X}{b}=b+1.$$
We define 2 sets:
\begin{align}
\label{eq:m less than equal b}
S_1(X)=\left\{m:m=\fl{\frac{X}{n}}, m\le b \right\}
\end {align}
and
\begin{align}
\label{eq:S greater than b}
S_2(X)=\left\{m:m=\fl{\frac{X}{n}}, n(n-1)\le X \right\}.
\end{align}
We will quantify $S_1(X)$ and then show that $S_1(X) \cup S_2(X) \subseteq S(X)$. This will allow us to use the inclusion-exclusion principle once we quantify $|S_2(X)|$ and $|S_1(X)\cap S_2(X)|$.

We start by calculating the number of elements of $S_1(X)$. Let $m$ be an arbitrary positive integer with
$$m \le b=\frac{-1+\sqrt{4X+1}}{2}.$$
This means that $$m^2+m-X\le 0$$ which implies that $m(m+1) \le X$. Thus
$$\frac{X}{m(m+1)}\ge 1$$
and therefore
$$\frac{X}{m}-\frac{X}{m+1}\ge 1.$$
Since the interval from $\frac{X}{m}$ to $\frac{X}{m+1}$ is at least 1 there must be an integer $n$ such that
\begin{align}
\label{eq:S-X2}
\frac{X}{m+1}<n\le \frac{X}{m},
\end{align}
from which
$$X-n < mn\le X.$$
In turn this implies that
$$\frac{X}{n}-1<m \le \frac{X}{n}.$$
This means that $m=\fl{X/n}$ and so $m \in S_1(X)$. From \eqref{eq:m less than equal b} there are $\fl{b}$ possible values of $m$. From \eqref{eq:S-X2} we see that can always find an $n$ to give us any of these values of $m$.
Therefore the numbers $1,2,\ldots,\fl{b}$ are the only elements of $S_1(X)$ and so
\begin{align}
\label{eq:first part}
|S_1(X)|= \fl{b}.
\end{align}
Apart from quantifying $S_1(X)$ we also note that the fact that $1,2,\ldots,\fl{b}\in S_1(X)$ implies that $S_1(X) \subseteq S(X)$. By reference to the definitions of $S_2(X)$ and $S(X)$ we see that $S_2(X) \subseteq S(X)$. Thus
$$S_1(X) \cup S_2(X) \subseteq S(X)$$ and so, using the inclusion-exclusion principle,
\begin{align}
\label{eq:SX}
|S(X)|=|S_1(X)|+|S_2(X)|-|S_1(X)\cap S_2(X)|.
\end{align}

We now consider the cardinality of $S_2(X)$.
We show that $n(n-1)<X$ implies $\fl{X/n}$ and $\fl{X/(n-1)}$ are distinct. We have
$$\fl{\frac{X}{n-1}}-\fl{\frac{X}{n}}=\frac{X}{n-1}-\frac{X}{n}-\left\{\frac{X}{n-1}\right\}+\left\{\frac{X}{n}\right\},$$
where $\{\cdot\}$ represents, as usual, the fractional part of the real number.
So
\begin{align}
\label{eq:high m values}
\fl{\frac{X}{n-1}}-\fl{\frac{X}{n}}&=\frac{X}{n(n-1)}+t,
\end{align}
where $t\in (-1,1)$.
Recalling that $n(n-1)\le X$ we have
$$\frac{X}{n(n-1)}\ge 1.$$
Substituting into \eqref{eq:high m values} we see that
$$\fl{\frac{X}{n-1}}-\fl{\frac{X}{n}}>0$$
which implies that $\fl{X/n}$
and $\fl{X/(n-1)}$ are distinct.
Since $n(n-1)\le X$ we have, solving the quadratic equation,
\begin{align}
\label{eq:Xb}
n& \le \frac{X}{b}
\end{align}
and so
\begin{align}
\label{eq:second part}
|S_2(X)|=\fl{\frac{X}{b}}=\fl{b}+1.
\end{align}

To finish the proof it only remains to consider $|S_1(X)\cap S_2(X)|$.
We have seen that
$$S_1(X)=\{1,2,\cdots,\fl{b}\}.$$
From \eqref{eq:Xb} we see that
$$n \le \frac{X}{b}=b+1.$$
So the values of $n$ in $S_2(X)$ are $1,2,\cdots\fl{b+1}$ and therefore
$$S_2(X)=\Bigg\{\fl{\frac{X}{\fl{b+1}}},\fl{\frac{X}{\fl{b}}},\cdots,X\Bigg\}.$$
The set $S_1(X) \cap S_2(X)$ will be non empty if
$$\fl{\frac{X}{\fl{b+1-c}}}=\fl{b-d},$$
for some $c,d\ge 0.$
From this we deduce that
$$\frac{X}{b+1-c}<b+1-d,$$
and so
$$X<((b+1)-d)(b+1-c).$$
Recalling that $X=b(b+1)$ we have
$$b+1-c(b+d+1)-d(b+1)>0,$$
which is only possible if $c=d=0$.
Thus there will be at most one element of $S_1(X) \cap S_2(X)$ and this one element will occur if, and only if,
$$\fl{\frac{X}{\fl{b+1}}}=\fl{b}.$$
In fact,
$$|S_1(X)\cap S_2(X)|=1-\(\fl{\frac{X}{\fl{b+1}}}-\fl{b}\).$$

Combining this equation with $\eqref{eq:SX}, \eqref{eq:first part}$ and $\eqref{eq:second part}$  and simplifying completes the proof of Theorem \ref{thm:set cardinality}.

Theorem \ref{thm:set cardinality 2}, that is
$$|S(X)|=2\sqrt{X}+O(1),$$
follows immediately from Theorem \ref{thm:set cardinality}.

\section{Discussion}
We can generalise $S(X)$ by considering elements of $S(X)$ that are divisible by some positive integer $d\le X$.
This is interesting in its own right but could also form the basis for calculating something much more interesting; the number of primes, semi primes or similar in $S(X)$.

Let $$S_d(X)=\left\{m:m=\fl{\frac{X}{n}} \textrm{for some } n \le X,d\mid\fl{\frac{X}{n}} \right\}.$$

A standard approach to express $S_d(X)$ would be to follow a path involving an indicator function, differences of floor functions, the $\psi$ function and exponential sums, hoping that we can bound the exponential sums (here $\psi(y)=y-\fl{y}-1/2$). Unfortunately this is not the case here. The process yields
\begin{lem}
\label{lem:number of divisors}
\begin{align*}
|S_d(X)|&=\frac{4X^{1/2}}{3d}+\sum_{r=1}^{\fl{\frac{X}{b}}}\sum_{j=\frac{X-r}{rd}}^\frac{X}{rd}\( \psi\(\frac{X}{dj+1}\)-\psi\(\frac{X}{dj}\)\)+O(1),
\end{align*}
where $a=X/b$.
\end{lem}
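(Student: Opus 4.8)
The plan is to mimic the decomposition of $S(X)$ used in the proof of Theorem~\ref{thm:set cardinality}, but now tracking the divisibility condition $d \mid m$. Recall that the elements of $S(X)$ arising from ``small'' $n$ (those with $n \le X/b$) are precisely the values $\fl{X/n}$ for $n = 1, 2, \ldots, \fl{X/b}$, and these are all distinct; there are $\fl{X/b} \approx \sqrt{X}$ of them. The remaining elements of $S(X)$ are $\{1, 2, \ldots, \fl{b}\}$, a full interval of integers, and among these exactly $\fl{\fl{b}/d} = b/d + O(1)$ are divisible by $d$. So the first step is to write $|S_d(X)| = b/d + N(X) + O(1)$, where $N(X)$ counts the integers $n$ in the range $1 \le n \le X/b$ for which $d \mid \fl{X/n}$, being careful about the single possible overlap between the two families (which contributes only $O(1)$).

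Next I would convert $N(X)$ into a sum over the multiples of $d$ that actually appear as values. For each $n \le X/b$ the value $m = \fl{X/n}$ is large (at least $b \approx \sqrt X$), so writing $m = dj$ the relevant $j$ run over a range roughly $[X/(nd) - 1/d, X/(nd)]$; more precisely $d \mid \fl{X/n}$ exactly when there is an integer $j$ with $dj \le X/n < dj+1$, i.e. $X/(dj+1) < n \le X/(dj)$. Summing the indicator over $n$ and swapping the order of summation turns $N(X)$ into $\sum_{r}\sum_j \big(\fl{X/(dj)} - \fl{X/(dj+1)}\big)$, where the outer index $r$ books the ``block'' $n \in (X/(r+1), X/r]$ (equivalently $r = \fl{X/n}$ up to the $d$-divisibility bookkeeping) and $j$ ranges over the multiples $dj$ of $d$ lying in that block. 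The outer range is $1 \le r \le \fl{X/b}$ and, within block $r$, the constraint $X - r < rdj \le X$ gives $j \in [(X-r)/(rd),\, X/(rd)]$, matching the stated double sum.

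The final step is to insert $\fl{y} = y - 1/2 - \psi(y)$ into each difference $\fl{X/(dj)} - \fl{X/(dj+1)}$. The $-1/2$ terms cancel in the difference, the main terms $X/(dj) - X/(dj+1)$ sum (over the full double range, using $\sum 1/j^2$-type estimates and $X = b(b+1)$) to $\tfrac{4}{3}X^{1/2}/d + O(1)$ — this is where the constant $4/3$ comes from, essentially $\int_1^\infty\!\!\int \cdots$ or equivalently summing the block contributions $\sim \sum_{r \le \sqrt X} (X/(r^2 d) - \text{correction})$ — and the surviving $\psi$-terms are exactly the claimed error sum. Combining $b/d + \tfrac{4}{3}X^{1/2}/d = \tfrac{1}{d}\big(\sqrt X + \tfrac{4}{3}\sqrt X\big) + O(1)$? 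No: one must be careful, since $b \approx \sqrt X$ and $\tfrac43\sqrt X$ together would give $\tfrac73 d^{-1}\sqrt X$, so in fact the bookkeeping must be arranged so that the interval-count contributes its share inside the main-term computation rather than being added separately; the cleanest route is to compute the full main term in one stroke as $\tfrac{4}{3}X^{1/2}/d$ and absorb the interval part $\fl{b}/d$ into it via the relation $X/b = b+1$, which forces the block sum and the interval count to be two halves of the same quantity.

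The main obstacle I anticipate is precisely this reconciliation of constants: verifying that the ``small $n$'' block contributions and the ``full interval'' contribution combine to exactly $\tfrac{4}{3}X^{1/2}/d$ rather than some other multiple, and that all the boundary terms (the single $S_1 \cap S_2$ overlap, the endpoints of the $j$-ranges, the difference between $\fl{X/b}$ and $\sqrt X$, and the replacement of $\fl{\fl b / d}$ by $b/d$) genuinely collapse into the single $O(1)$. This is a matter of careful but routine estimation; the structural decomposition itself is dictated by the proof of Theorem~\ref{thm:set cardinality}.
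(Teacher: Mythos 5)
Your structural decomposition coincides with the paper's: split $S_d(X)$ into the large-$n$ part, whose values are exactly $1,2,\ldots,\fl{b}$ and contribute $\fl{\fl{b}/d}=X^{1/2}/d+O(1)$ multiples of $d$, and the small-$n$ part ($n<X/b$), whose values $\fl{X/n}$ are distinct and exceed $X^{1/2}$, so that counting them reduces (via the equivalence $d\mid\fl{X/n}\iff X/(dj+1)<n\le X/(dj)$ for some integer $j$, together with uniqueness of such an $n$ in this range) to the double sum $\sum_r\sum_j\bigl(\fl{X/(dj)}-\fl{X/(dj+1)}\bigr)$; substituting $\fl{y}=y-\tfrac12-\psi(y)$ then produces exactly the $\psi$-sum in the statement. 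Up to here you match the paper, modulo a harmless misreading of the outer index (in the paper $r$ is the small variable $n$ itself, $1\le r\le\fl{a}$, and the $j$-range encodes $dj\in(X/r-1,\,X/r]$).

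The genuine gap is the main-term evaluation, which you never carry out and guess incorrectly. You assert that the double sum of the main terms $X/(dj)-X/(dj+1)$ is $\tfrac43X^{1/2}/d+O(1)$, observe that adding the separate interval contribution $b/d$ would then give $\tfrac73X^{1/2}/d$, and try to repair this by ``absorbing'' $\fl{b}/d$ into the main term so that the interval count and the block sum become ``two halves of the same quantity.'' That cannot work: for $r\le\fl{X/b}$ the constraint $(X-r)/(rd)\le j\le X/(rd)$ forces $dj>X/r-1\ge b$, so the double sum only sees the large values of $\fl{X/n}$ and cannot also account for the multiples of $d$ in $\{1,\ldots,\fl{b}\}$; the two contributions are disjoint and must simply be added. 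The correct computation (the paper's $S_1$, via Abel summation) gives a different constant: each term is $X/(dj(dj+1))$ with $dj\approx X/r$, and the $j$-window has length $1/d$, so the inner sum is $\approx \frac{X}{d^2}\cdot\frac{r^2d}{X(X-r)}$ and the outer sum is $\frac1d\sum_{r\le\fl{a}}\frac{r^2}{X-r}=\frac{X^{1/2}}{3d}+O(1)$, not $\tfrac43X^{1/2}/d$. Adding the interval part $X^{1/2}/d+O(1)$ then produces the stated $\tfrac{4X^{1/2}}{3d}$, with all boundary and overlap effects in the $O(1)$. Without this evaluation, and with the incorrect absorption step, your argument does not reach the formula of the lemma.
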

A proof is given in Section \ref{sec:no of divisors}.
Calculating various sums using Maple suggests that the double sum cannot successfully be bound. In fact Maple suggests that the double sum is asymptotically equivalent to $2X^{1/2}/3d$. If this argument is correct then
$$S_d(X)\sim\frac{2X^{1/2}}{d},$$
as one would expect heuristically.

\section{Trivial bounds}
In the absence of a better approach we outline some trivial bounds on $|S_d(X)|$. The interested reader may wish to improve these bounds.
\begin{thm}
For a real positive $X$ and a positive integer $d\le X$ with $d\ne 1$ we have
$$\frac{X^{1/2}}{d}+O(1)\le |S_d(X)|\le \frac{X}{2}+O(1).$$
\end{thm}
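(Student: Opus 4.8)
The plan is to prove the two inequalities separately, the lower bound by exhibiting an explicit injection and the upper bound by a counting argument.

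For the lower bound, I would first observe that every multiple of $d$ that is small enough is automatically hit. Concretely, if $m$ is a positive integer with $m \le b$ (where $b$ is as in \eqref{eq:b}), then the argument establishing $S_1(X) = \{1, 2, \ldots, \fl{b}\}$ in Section~\ref{sec:main theorem} shows $m \in S(X)$; so in particular $m \in S_d(X)$ whenever in addition $d \mid m$. The number of multiples of $d$ in $\{1, \ldots, \fl{b}\}$ is $\fl{\fl{b}/d} = \fl{b}/d + O(1)$, and since $b = \sqrt{X} + O(1)$ this is $X^{1/2}/d + O(1)$. This already gives $|S_d(X)| \ge X^{1/2}/d + O(1)$, which is the claimed lower bound, and it only uses $d \le b$ roughly; for $d$ larger than $b$ the bound $X^{1/2}/d + O(1)$ is absorbed into the $O(1)$ term (it is at most $1 + O(1)$), so nothing is lost there, though one should remark that the set could even be empty in a degenerate range and the $O(1)$ must be allowed to be negative or the statement read with the usual convention.

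For the upper bound, the point is simply that $S_d(X) \subseteq S(X)$, together with a crude bound on how many elements of $S(X)$ can be divisible by $d$. The cheapest route: every element of $S_d(X)$ is a positive multiple of $d$ that is at most $X$ (since $\fl{X/n} \le X$ for $n \ge 1$), so there are at most $\fl{X/d}$ of them. For $d \ge 2$ this is at most $X/2$, giving $|S_d(X)| \le X/2 + O(1)$ (indeed $\le X/2$ exactly, with the $O(1)$ unnecessary, but stating it with $O(1)$ is harmless and matches the theorem as written). Alternatively one could invoke Theorem~\ref{thm:set cardinality 2} to get $|S_d(X)| \le |S(X)| = 2\sqrt{X} + O(1)$, which is far stronger than $X/2 + O(1)$ for large $X$; but since the theorem as stated only asks for $X/2 + O(1)$, the multiples-of-$d$ counting argument is the most transparent and self-contained.

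The only genuine subtlety — and it is minor — is bookkeeping in the regime where $d$ is comparable to or larger than $\sqrt{X}$, where the lower bound $X^{1/2}/d$ drops below $1$: there one must be careful that the asserted inequality still makes sense, i.e. that the $O(1)$ is interpreted as an additive error that may be negative. I do not expect any real obstacle; both bounds are, as the authors say, trivial, and the proof is essentially two one-line observations plus the containments $S_1(X) \cap (d\Z) \subseteq S_d(X) \subseteq S(X) \cap (d\Z)$.
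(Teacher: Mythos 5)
Your proof is correct, and the lower bound is exactly the paper's argument: the elements $1,2,\ldots,\fl{b}$ of $S_1(X)\subseteq S(X)$ contain $\fl{\fl{b}/d}=X^{1/2}/d+O(1)$ multiples of $d$, using $b=X^{1/2}+O(1)$. Your upper bound, however, goes by a different (equally trivial) counting: you note that every element of $S_d(X)$ is a positive multiple of $d$ not exceeding $X$, so $|S_d(X)|\le \fl{X/d}\le X/2$ once $d\ge 2$; this is where you use the hypothesis $d\ne 1$, and it is fully self-contained. The paper instead bounds the ambient set: among the $X$ terms of the sequence $\(\fl{X/n}\)_{n=1}^X$ the value $1$ occurs $X/2$ times ($(X+1)/2$ times for odd $X$), so the number of distinct values, hence $|S_d(X)|\le |S(X)|$, is at most $X/2+O(1)$ --- an argument that never uses divisibility by $d$ at all. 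The two routes are comparable in strength; yours is arguably the more natural use of the hypothesis $d\ge 2$, while the paper's multiplicity argument bounds $|S(X)|$ itself. Your further remark that Theorem \ref{thm:set cardinality 2} gives the much stronger bound $|S_d(X)|\le 2X^{1/2}+O(1)$ is also correct (the paper does not make it here), and your caveats about the regime $d\gg X^{1/2}$, where the stated lower bound is vacuous with a possibly negative $O(1)$, are sensible bookkeeping rather than a gap.
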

\begin{proof}
The lower bound follows from the fact that $1,2,\ldots,\fl{b}\in S(X)$ (see Section \ref{sec:main theorem}). Of these $\fl{\fl{b}/{d}}$ will be divisible by $d$. Recalling that $b=X^{1/2}+O(1)$ the result follows. The upper bound flows from the fact that of the $X$ numbers in the sequence $(\fl{X/n})_{n=1}^X$ the number 1 appears $X/2$ times if $X$ is even and $(X+1)/2$ times if $X$ is odd.
\end{proof}
\section{Proof of Lemma \ref{lem:number of divisors}}
To simplify notation we will let
$$a=\frac{X}{b}.$$
\label{sec:no of divisors}
It is clear that
\begin{align}
\label{eq:SdX}
S_d(X)=S_d^-(X) \cup S_d^+(X),
\end{align}
where
$$S_d^-(X)=\left\{m:m=\fl{\frac{X}{n}} \textrm{for some } n <a,d|\fl{\frac{X}{n}} \right\}$$
and
$$S_d^+(X)=\left\{m:m=\fl{\frac{X}{n}} \textrm{for some } n \ge a,d|\fl{\frac{X}{n}}\right\}.$$
From Section \ref{sec:main theorem} it is clear that the numbers $1,2,\ldots,\fl{b}$ will be elements of  $S^+(x)$. Of these exactly $\fl{\fl{b}/d}$ will be divisible by $d$ and so
\begin{align}
\label{eq:Sdplus}
|S_d^+(x)|=X^{1/2}+O(1).
\end{align}
We now quantify $S_d^-(X)$. We observe that if
$$\frac{X}{dj+1}\le n < \frac{X}{dj}$$ then
$$dj< \frac{X}{n} \le dj+1$$ and so
$$\fl{\frac{X}{n}}=dj \text{ for some }j,$$
which implies that
$$m=\fl{\frac{X}{n}} \in S_d^-(X).$$
Furthermore,
$$|S_d^-(X)|=\sum_{\substack{\frac{X}{dj+1}< n \le \frac{X}{dj}\\n<a}}1,$$
if the elements of $\{\fl{X/n}\}_1^{\fl{a}}$ are distinct. To see that this condition is true we note that $n<a$ implies that $n(n+1)<X$ which means that $X/n(n+1)>1$. Thus $\fl{X/n}-\fl{X/(n+1)}>0$ which proves distinctiveness.

Next, since $n<a$ we also have that $$\frac{X}{dj}-\frac{X}{dj+1}=\frac{X}{m(m+1)}<\frac{X}{m^2}<1,$$
the last inequality being justified by the fact that $n<a$ implies that $m>X^{1/2}$. This means that there can only be one value of $n$ between
$$\frac{X}{dj} \text{ and }\frac{X}{dj+1}.$$
Therefore
$$|S_d^-(X)|=\sum_{r=1}^{\fl{a}}\sum_{j=\frac{X-r}{rd}}^{\frac{X}{rd}} \textbf{1}(j),$$
where
$$
\textbf{1}(j)=
\begin{cases}
1 &\text{if }X/(dj+1)< n \le X/dj \text{ for some } n \in \N, \\
0 & {\text{otherwise}}.
\end{cases}
$$
We can replace the indicator function with floor functions as follows:
\begin{align}
\label{eq:indicator}
|S_d^-(X)|&=\sum_{r=1}^{\fl{a}}\sum_{j=\frac{X-r}{rd}}^{\frac{X}{rd}}\fl{\frac{X}{jd}}-\fl{\frac{X}{dj+1}}.
\end{align}

For any real $t \in \R$ we denote
$$\psi(t)=t-\fl{t}-\frac{1}{2}.$$
Replacing the floor functions in \eqref{eq:indicator} with the $\psi$ function we obtain
\begin{align}
\label{eq:Sdminus}
|S_d^-(X)|&=\sum_{r=1}^{\fl{a}}\sum_{j=\frac{X-r}{rd}}^\frac{X}{rd}\( \frac{X}{dj}-\frac{X}{dj+1}+\psi\(\frac{X}{dj+1}\)-\psi\(\frac{X}{dj}\)\)
\\&=S_1+S_2\notag,
\end{align}
where
$$S_1=\sum_{r=1}^{\fl{a}}\sum_{j=\frac{X-r}{rd}}^\frac{X}{rd}\( \frac{X}{dj}-\frac{X}{dj+1}\)$$
and
$$S_2=\sum_{r=1}^{\fl{a}}\sum_{j=\frac{X-r}{rd}}^\frac{X}{rd}\( \psi\(\frac{X}{dj+1}\)-\psi\(\frac{X}{dj}\)\).$$
Estimating $S_1$ we have
\begin{align}
\label{eq:S1}
S_1&=\sum_{r=1}^{\fl{a}}\sum_{j=\frac{X-r}{rd}}^\frac{X}{rd}\frac{X}{dj(dj+1)}\notag\\
&=\frac{X}{d}\sum_{r=1}^{\fl{a}}\sum_{j=\frac{X-r}{rd}}^\frac{X}{rd}\frac{1}{dj^2}-\frac{X}{d}\sum_{r=1}^{\fl{a}}\sum_{j=\frac{X-r}{rd}}^\frac{X}{rd} \frac{1}{dj^2(dj+1)}\notag\\
&=\frac{X}{d^2}\sum_{r=1}^{\fl{a}}\sum_{j=\frac{X-r}{rd}}^\frac{X}{rd}\frac{1}{j^2}+O\(X\sum_{r=1}^{\fl{a}}\sum_{j=\frac{X-r}{rd}}^\frac{X}{rd}\frac{1}{j^3}\).
\end{align}
We now estimate
$$\frac{X}{d^2}\sum_{r=1}^{\fl{a}}\sum_{j=\frac{X-r}{rd}}^\frac{X}{rd}\frac{1}{j^2}.$$

Using Abel summation we have
\begin{align*}
\sum_{j=\frac{X-r}{rd}}^\frac{X}{rd}\frac{1}{j^2}&=\frac{(X-r)/rd}{((X-r)/rd)^2}-\frac{X/rd}{(X/rd)^2}-\int_{(X-r)/rd}^{X/rd}t\(\frac{-2}{t^3}\) dt\\
&=\frac{1}{(X-r)/rd}-\frac{1}{X/rd}+2\int_{(X-r)/rd}^{X/rd}\frac{1}{t^2} dt\\
&=-\frac{r^2d}{X(X-r)}+2\frac{r^2d}{X(X-r)}\\
&=\frac{r^2d}{X(X-r)}.
\end{align*}
So
\begin{align*}
\frac{X}{d^2}\sum_{r=1}^{\fl{a}}\sum_{j=\frac{X-r}{rd}}^\frac{X}{rd}\frac{1}{j^2}&=\frac{X}{d^2}\sum_{r=1}^{\fl{a}}\frac{r^2d}{X(X-r)}\\
&=\frac{1}{d}\sum_{r=1}^{\fl{a}}\frac{r^2}{X-r}.
\end{align*}
Using Abel summation again we have
\begin{align*}
\frac{X}{d^2}\sum_{r=1}^{\fl{a}}\sum_{j=\frac{X-r}{rd}}^\frac{X}{rd}\frac{1}{j^2}&=
\frac{1}{d}\[\frac{\fl{a}(\fl{a}+1)(2\fl{a}+1)}{6(X-\fl{a})}-\int_1^{\fl{a}}\frac{u(u+1)(2u+1)}{6(X-u)^2}du\].
\end{align*}
Observe that $\fl{a}=X^{1/2}+O(1)$. Thus
\begin{align}
\label{eq:S1 1}
\frac{X}{d^2}\sum_{r=1}^{\fl{a}}\sum_{j=\frac{X-r}{rd}}^\frac{X}{rd}\frac{1}{j^2}
&=\frac{1}{6d}\[\frac{2X^{3/2}+O(X)}{X-X^{1/2}+O(1)}+O(1)\]\notag\\
&=\frac{X^{1/2}}{3d}+O(1).
\end{align}
Using a similar analysis we have
\begin{align}
\label{eq:S1 2}
O\(X\sum_{r=1}^{\fl{a}}\sum_{j=\frac{X-r}{rd}}^\frac{X}{rd}\frac{1}{j^3}\)&=O(X^{-1/2}).
\end{align}
Substituting \eqref{eq:S1 1} and \eqref{eq:S1 2} into \eqref{eq:S1} we conclude that
\begin{align}
\label{eq:S1 final}
S_1=\frac{X^{1/2}}{3d}+O(1).
\end{align}
substituting this expression for $S_1$ into \eqref{eq:Sdminus} and then \eqref{eq:Sdminus} and \eqref{eq:Sdplus} into \eqref{eq:SdX} completes the proof.
\section{Acknowledgements}
The author thanks William Banks for suggesting the problem (which follows naturally from \cite{Bor}), for explaining the process outlined in Section \ref{sec:no of divisors} and for his hospitality during a very pleasant stay in Missouri. The author also thanks Igor Shparlinski and Olivier Bordell\`{e}s for some useful comments.

\makeatletter
\renewcommand{\@biblabel}[1]{[#1]\hfill}
\makeatother


\begin{thebibliography}{9}


\bibitem{Bor}  O. Bordell\`{e}s,~L.~Dai,~R.~Heyman,~H.~Pan,~I.~E.~Shparlinski,
`On a sum involving the Euler function', Journal of Number Theory, accepted manuscript,
available at https://doi-org.wwwproxy1.library.unsw.edu.au/10.1016/j.jnt.2019.01.006



\bibitem{BouWat}  J.~Bourgain and N.~Watt,
`Mean square of zeta function, circle problem and divisor problem
revisited', Preprint (2017) available at Arxiv:1709.04340 [math.NT]



\bibitem{Chern} S.~Chern
`Notes on sums involving the Euler function', Preprint (2018) available at Arxiv:1812.04657[math.NT]


\bibitem{Gos}  A.~Goswami,
`On a partial sum related to the Euler function',
Preprint (2018) available at arXiv:1812.07556 [math.NT]




%




\end{thebibliography}
\end{document}